\newtheorem{theorem}{Theorem}[section]
\newtheorem{lemma}[theorem]{Lemma}
\theoremstyle{definition}
\newtheorem{corollary}[theorem]{Corollary}
\newtheorem{proposition}[theorem]{Proposition}
\newcommand{\R}{{\Bbb R}}
\theoremstyle{remark}
\newtheorem{remark}[theorem]{Remark}
\numberwithin{equation}{section}
\begin{document}

\title[3/2-stability theorem  for the KPP-Fisher delayed equation]{An extension of the Wright's 3/2-theorem for the KPP-Fisher delayed equation}
\author{Karel Hasik}
\address{Mathematical Institute, Silesian University, 746 01 Opava, Czech Republic}
\email{Karel.Hasik@math.slu.cz}
\thanks{This research was realized within the framework of the OPVK program, project CZ.1.07/2.300/20.0002. The second author was also partially supported  by FONDECYT (Chile), project 1110309, and by
CONICYT (Chile) through PBCT program ACT-56.}
\author{Sergei Trofimchuk}
\address{Instituto de Matem\'atica y Fisica, Universidad de Talca, Casilla 747,
Talca, Chile}
\email{trofimch@inst-mat.utalca.cl}

\begin{abstract}We present a short proof of the following natural extension of the famous Wright's $3/2$-stability theorem: the conditions $\tau \leq 3/2, \ c \geq 2$ imply the  presence of the positive traveling fronts (not necessarily monotone) $u = \phi(x\cdot \nu+ct), \ |\nu| =1,$ in the
delayed KPP-Fisher equation
$u_t(t,x) = \Delta u(t,x)  + u(t,x)(1-u(t-\tau,x)), $ $u \geq 0,$ $x
\in \R^m.$   
\end{abstract}

\maketitle

\section{Introduction and main result}\label{intro} 
\noindent The delayed KPP-Fisher (i.e. Kolmogorov-Petrovskii-Piskunov-Fisher)  equation  \begin{equation}\label{17}
u_t(t,x) = \Delta u(t,x)  + u(t,x)(1-u(t-\tau,x)), \ u \geq 0,\ x \in
\R^m, \tau\geq 0, \end{equation} 
is one of the most conspicuous
examples of delayed reaction-diffusion equations. During the past decade, this model  together with the following non-local version of the KPP-Fisher equation 
\begin{equation}\label{17nl} 
u_t(t,x) = \Delta u(t,x)  + u(t,x)\left(1-\int_{\R^n} K(y) u(t,x-y)dy\right), \ \int_{\R^n} K(s)ds=1. 
\end{equation}
have been intensively  studied by many authors,  see e.g.  
\cite{BNPR,FW,FZ,fhw,GT,HT,KO,wz}.    

One of the key topics related to equations (\ref{17}), (\ref{17nl}) concerns  the existence and further properties of smooth positive  traveling front solutions  $u(x,t) = \phi(\nu \cdot  x +ct),$ $|\nu| =1$ for (\ref{17}).  It is supposed that $c>0$ and that  the profile
$\phi$ satisfies the boundary conditions $\phi(-\infty) = 0$, $\phi(+\infty) = 1$. 
A few years ago,  not much was known about the conditions guaranteeing the existence 
of these wavefronts in (\ref{17}).  Several   existence results having rather partial character were provided  in \cite{wz} (for each $c>2$ and $\tau \in [0,\tau(c)]$ with sufficiently small $\tau(c)$) and in \cite{fhw,FTnl} (for each $\tau  \leq 3/2$ and  $c \geq c(\tau)$ with sufficiently large $c(\tau)$).  In this respect, a significant progress 
was achieved only very recently when the existence and uniqueness problems for (\ref{17}), (\ref{17nl}) were completely solved for the case of monotone profiles \cite{BNPR,FW,FZ,GT,HT,KO}.  However, the monotonicity of $\phi$  is  a rather restrictive assumption:  it is clear that  traveling fronts of (\ref{17}), (\ref{17nl})  that  oscillate around $1$ at $+\infty$ (hence, non-monotone ones)   comprise the largest part of  the set of all wavefront solutions \cite{ZAMP,BNPR,HT,NPT}.  
In this note, by establishing an `almost optimal criterion'  for the presence of oscillating fronts in equation (\ref{17}), we achieve an essential improvement of the existence results from \cite{FTnl,GT,KO,wz}.   Still, the complete solution of the mentioned problem remains to be a quite challenging project which is directly  connected  to  the long standing Wright's global stability conjecture \cite{TK,EMW}.  

Let us explain the last comment in  more detail. Indeed,  looking for a wave solution of  (\ref{17}) in the slightly modified form $u(t,x) = \psi(\sqrt{\epsilon}x+t), \ \epsilon = 1/c^2,$ $ \psi (s) = \phi(cs)$, we find that 
\begin{equation}\label{eWE}
\epsilon \psi''(t) - \psi'(t) +\psi(t)(1-\psi(t-\tau)) =0, \ t \in \R.  
\end{equation}
In the limit case $\epsilon =0$ equation (\ref{eWE}) is called the Hutchinson's equation and it was 
conjectured by E.M. Wright  \cite{EMW} that the steady state $\psi =1$ of
 (\ref{eWE}) with ${\epsilon =0}$ is globally stable in the domain of all positive solutions  $\psi >0$ if and only if  $\tau \leq \pi/2$.  A weaker version of the Wright's conjecture can be also considered:  the Hutchinson's equation has a positive heteroclinic connection (i.e. traveling front type solution) if and only if  $\tau \leq \pi/2$.  The both conjectures are supported by the 
 {\it `very difficult theorem of Wright'} (the quoted phrase is from the Jack  Hale's book \cite[p.64]{JH}) proved in  \cite{EMW}:  If $\tau \leq 3/2$ then the positive equilibrium of  (\ref{eWE}) with ${\epsilon =0}$  is globally stable in the domain of positive solutions.  Remarkably, as it was shown in \cite{fhw,FTnl} by  means of 
the Hale-Lin approach,  the Wright's $3/2$-theorem  can be extended to (\ref{eWE}) with $\epsilon >0$ in the following way:  equation   (\ref{eWE}) has a positive heteroclinic connection for each positive fixed $\tau \leq 3/2$ if  $\epsilon>0$ is sufficiently small.  The main result of this work shows that the smallness condition on $\epsilon$ (i.e. the requirement that the propagation speed $c$ has to be sufficiently large) can be avoided and that the full analog of the Wright's theorem holds for  (\ref{17}):
\begin{theorem} \label{eWT}
Assume that $c \geq 2$ and $\tau \in [0, 3/2]$. Then the delayed KPP-Fisher  reaction-diffusion equation  (\ref{17}) has at least one positive  traveling front solution. 
\end{theorem}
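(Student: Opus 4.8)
The plan is to construct the traveling front for equation (\ref{eWE}) with arbitrary fixed $\epsilon = 1/c^2 \in (0,1/4]$ and $\tau \in [0,3/2]$, reducing the PDE problem to finding a positive heteroclinic connection $\psi(-\infty)=0$, $\psi(+\infty)=1$ of this second-order delayed ODE. First I would establish the local behavior of solutions near the two equilibria. Near $\psi=0$, linearizing (\ref{eWE}) gives the characteristic equation $\epsilon z^2 - z + 1 = 0$; for $\epsilon \le 1/4$ (i.e. $c \ge 2$) this has two positive real roots $0 < \lambda_- \le \lambda_+$, so the unstable manifold at $0$ is one-dimensional and is entered by positive solutions growing like $e^{\lambda_- t}$. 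Near $\psi = 1$, the linearization is $\epsilon w'' - w' - w(t-\tau) = 0$ with characteristic function $\epsilon z^2 - z - e^{-z\tau}$; I would verify that there is a unique real root $z=\mu<0$ and that all other roots have negative real part, so that $\psi=1$ is locally stable and approachable. This sets up the correct boundary behavior and identifies the one-parameter family of admissible trajectories leaving $0$.

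\medskip
\noindent The core of the argument is to prove that the trajectory $\psi$ leaving $0$ along the unstable direction stays positive for all $t$ and converges to $1$. Here is where the Wright $3/2$-mechanism enters. Rather than working with the full second-order equation directly, I would exploit the observation that the KPP-Fisher traveling wave equation can be rewritten so that the nonlinear term $\psi(t)(1-\psi(t-\tau))$ controls the dynamics in the same way as in Hutchinson's equation. The key step is to derive an a priori bound: I would integrate (\ref{eWE}) or use a comparison/sliding argument to show that any bounded positive solution connecting the equilibria must satisfy $0 < \psi(t) < K$ for an explicit constant $K$, and then show that the upper and lower limits $\limsup_{t\to\infty}\psi$ and $\liminf_{t\to\infty}\psi$ coincide. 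The mechanism is precisely Wright's: introducing $v = \psi - 1$ (after establishing boundedness), the delayed feedback with $\tau \le 3/2$ forces $v(t)\to 0$ because the oscillation amplitudes contract — this is the content of the 3/2 estimate, which I expect to transplant to the $\epsilon>0$ setting by a suitable integral inequality accounting for the diffusion term $\epsilon\psi''$.

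\medskip
\noindent For existence itself (as opposed to the stability of a presumed connection), I would set up the problem as a fixed point. The most robust route is to truncate the delayed equation on a large interval $[-n,n]$, impose the boundary values $\psi(-n)=\delta$ and $\psi(n)=1$, solve the resulting boundary-value problem for the elliptic-type operator $\epsilon\psi'' - \psi'$ using a monotone-iteration or Schauder fixed-point scheme in a suitable cone of positive functions, and then pass to the limit $n\to\infty$, $\delta\to 0$ using the a priori bounds above and compactness (Arzel\`a-Ascoli applied to the family and its derivatives, which are controlled by the equation). The positivity is preserved throughout because the unstable-manifold solution leaves $0$ monotonically on an initial interval, and the comparison with the Hutchinson dynamics prevents $\psi$ from returning to $0$.

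\medskip
\noindent The hard part is the positivity-and-convergence step under the sharp constraint $\tau \le 3/2$ with $\epsilon>0$ fixed but not small. The difficulty is that Wright's original proof is tailored to the first-order equation $\epsilon=0$, and the second-order term $\epsilon\psi''$ introduces an extra degree of freedom that can, a priori, destroy the contraction of oscillations. I expect the crux to be an integral inequality of the form $\int (\psi(t)-1)\,dt$-type estimate that simultaneously absorbs the $\epsilon\psi''$ term (which integrates to a boundary contribution that vanishes at the equilibria) and reproduces Wright's $3/2$ bound on the remaining delayed logistic term. Managing this uniformly in $\epsilon \in (0,1/4]$, so that the smallness hypothesis of \cite{fhw,FTnl} is genuinely removed, is the essential new content of the theorem.
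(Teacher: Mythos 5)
There is a genuine gap: the step you yourself identify as ``the hard part'' --- proving that the oscillations of a bounded positive solution around $1$ actually contract when $\tau\le 3/2$ and $\epsilon=1/c^2\in(0,1/4]$ is \emph{not} small --- is exactly the content of the theorem, and your proposal only states the expectation that ``a suitable integral inequality'' will transplant Wright's argument; no such inequality is produced, and it is not at all routine to produce one. The paper does not obtain the contraction from an integral identity absorbing $\epsilon\psi''$. Instead it starts from the known existence of a \emph{semi-wavefront} (a positive solution with $\phi(-\infty)=0$ and $0<\liminf\phi\le\limsup\phi<\infty$, quoted from \cite{HT}), substitutes $\phi=e^{-x}$, and observes that $y=x'$ satisfies the Riccati-type problem $y'=y^2+cy-g(t)$ with $g(t)=e^{-x(t-h)}-1$. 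A phase-plane analysis of the isoclines then gives the sharp pointwise bound $y(t)\gtrless \rho(x(t-h))/h$ on appropriate subintervals (Lemma \ref{L20}), and iterating this through consecutive extremal values $V_j$ of $x$ yields the recursive estimates $V_{2j+1}\le A_-(V_{2j})$, $V_{2j}\ge D(V_{2j-1})$ built from the explicit functions $\rho$, $r$, $A_\pm$, $B$. The contraction $M_*=m_*=0$ is finally forced by the negativity of the Schwarzian derivative of these bounding maps (Lemma \ref{31}, Proposition \ref{old}, Corollary \ref{fi}), in the tradition of \cite{lpprt} rather than of Wright's original integral estimates. Nothing in your sketch supplies a substitute for this quantitative machinery.

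Two further concrete problems. First, your local analysis at $\psi=1$ is wrong: the characteristic function $\epsilon z^2-z-e^{-z\tau}$ is negative at $z=0$ and tends to $+\infty$ as $z\to+\infty$ along the reals, so it \emph{always} has a positive real root; the equilibrium $1$ is never ``locally stable with all other roots of negative real part,'' and for the parameter range where the fronts are non-monotone the decaying roots are complex, which is precisely why the profile oscillates. Second, your truncation/Schauder scheme would at best reprove the existence of a semi-wavefront (the result of \cite{HT} that the paper takes as its starting point); it does not by itself give $\psi(+\infty)=1$, and the monotone-iteration part of it is unavailable here because for $\tau>1$ the relevant profiles are not monotone and the delayed nonlinearity is not quasi-monotone. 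So the proposal, as written, reduces the theorem to precisely the statement that still needs to be proved.
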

It is well known that  the inequality $c\geq 2$ is mandatory for the existence of 
positive wavefronts \cite{BNPR,GT,HT}. We also believe that, similarly to the monotone fronts \cite{FW,FZ,GT,HT},  there is a unique  (up to a translation) oscillating front  for each fixed $c$.  

Theorem \ref{eWT} strongly  supports the next generalisation of the weak Wright's conjecture  \cite{HT}:  equation  (\ref{17}) has at least one positive  traveling front $u = \phi(\nu \cdot  x +ct),$ $|\nu| =1$,   if and only if $c \geq 2$ and the equation $\lambda^2-c\lambda - e^{-\lambda c\tau} =0$ has a unique root $\lambda$ with the positive real part.  In particular, this means  that the maximal possible improvement of the interval $[0,3/2]$ in  Theorem \ref{eWT}  is   $[0,\pi/2]$, see \cite[Figure 1]{HT}. 

The starting point for the proof of Theorem \ref{eWT} is the fact that, for each $c\geq 2$, equation  (\ref{17}) (similarly to equation (\ref{17nl}), see \cite{BNPR})
has at least one positive wave solution  $u = \phi(\nu \cdot  x +ct),$ $|\nu| =1$, satisfying the boundary conditions $\phi(-\infty)=0$, $0 <m= \lim\inf_{t\to+\infty} \phi(t) \leq \lim\sup_{t\to+\infty} \phi(t) =M < +\infty$ (i.e. a semi-wavefront), see \cite{HT}.  The next important fact is that each non-monotone semi-wavefront profile is   sine-like slowly oscillating around $1$ at $+\infty$ \cite{HT,mps,mps2}. 
In Section \ref{BF}, we present several explicit analytic relations limiting  the amplitude of these oscillations. At the first glance, the mentioned restrictions are generated by rather  cumbersome bounding functions. Surprisingly,   
these functions have nice internal structures (previously analysed in  \cite{lpprt}) that allow for their satisfactory description  in Section \ref{AF}.  At the very end of Section \ref{BF},  in order to demonstrate Theorem \ref{eWT},  we show that $\tau \leq 3/2$ together with  $c \geq 2$ imply  $m=M=1$. 

\vspace{1cm}
\section{Auxiliary functions} \label{AF}
Our approach to the proof of  Theorem \ref{eWT} requires the construction of several suitable bounding functions.  These functions are necessary to relate the values of $m$ and $M$ (defined a few lines above); it is clear  that their choice is  by no means unique.  Below, we  present our auxiliary functions and prove their properties  which are later used  in the proof of Theorem \ref{eWT}. 
First, we consider  
$$
\rho(x)= \rho(x,\tau,c):= \tau c f(w(x)), \ \mbox{where} \ f(x): = \frac{-c+\sqrt{c^2+4x}}{2}, \quad  w(x) :=e^{-x}-1. 
$$
\begin{proposition}{\cite{HT}} \label{nsw} \ Let $c \geq 2$. Then the  real analytic  function 
$\rho(x),$ $ x \in \R, $ $ \rho(0) =0, \ \rho(-\infty) = +\infty, \ \rho(+\infty) = -0.5\tau c (-c+\sqrt{c^2-4})<0, $ is strictly decreasing (in fact, $\rho'(x)<0, \ x \in \R$) and has the 
negative Schwarz derivative $(S\rho)(x)$ on $\R$:
$(S\rho)(x)=\rho'''(x)/\rho'(x)-3/2 \left(\rho''(x)/\rho'(x)\right)^2 <0, \quad x \in \R.$
\end{proposition}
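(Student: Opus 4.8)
The plan is to establish each claimed property of $\rho$ by tracking it through the composition $\rho = \tau c\, f\circ w$. Let me sketch the proof for this proposition.

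The plan is to treat $\rho=\tau c\,(f\circ w)$ as a composition and read off each property from the elementary building blocks $f$ and $w$. The first checkpoint is real analyticity: since $c\geq 2$ and $w(x)=e^{-x}-1>-1\geq -c^2/4$, the radicand $c^2+4w(x)=c^2-4+4e^{-x}$ stays strictly positive for all $x\in\R$, so $\sqrt{c^2+4w(\cdot)}$ — and hence $\rho$ — is real analytic on $\R$. The boundary values then follow by continuity together with $w(0)=0$, $w(-\infty)=+\infty$, $w(+\infty)=-1$: substitution gives $\rho(0)=\tau c\,f(0)=0$, the growth $f(y)\sim\sqrt{y}$ yields $\rho(-\infty)=+\infty$, and $f(-1)=\tfrac12(-c+\sqrt{c^2-4})$ produces the stated negative limit at $+\infty$. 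Strict monotonicity is then immediate from the chain rule: writing $\rho'(x)=\tau c\,f'(w(x))\,w'(x)$ with $f'(y)=(c^2+4y)^{-1/2}>0$ and $w'(x)=-e^{-x}<0$ gives $\rho'(x)<0$ for every $x$ (here $\tau>0$; the value $\tau=0$ is degenerate).

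The real work is the sign of the Schwarzian, and the cleanest route is the composition cocycle $S(f\circ w)=(Sf\circ w)\,(w')^2+Sw$ together with the invariance of $S$ under affine (M\"obius) post-composition, which discards the factor $\tau c$ and yields $S\rho=S(f\circ w)$. Both ingredients are elementary. Since $w$ is an affine image of $e^{-x}$, one finds $Sw\equiv-\tfrac12$; and since $f$ is an affine image of $\sqrt{c^2+4y}$, a short computation gives $Sf(y)=6/(c^2+4y)^2>0$. Combining these with $(w'(x))^2=e^{-2x}$ and $c^2+4w(x)=c^2-4+4e^{-x}$ produces
$$
(S\rho)(x)=\frac{6\,e^{-2x}}{(c^2-4+4e^{-x})^2}-\frac12 .
$$

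The argument now reduces to one elementary inequality, which I expect to be the only genuine obstacle once the composition formula is in place. Setting $t=e^{-x}>0$, the desired bound $(S\rho)(x)<0$ is equivalent to $12t^2<(c^2-4+4t)^2$, i.e. to
$$
0<4t^2+8(c^2-4)\,t+(c^2-4)^2 .
$$
This is precisely where the hypothesis $c\geq 2$ is used: it forces $c^2-4\geq 0$, so every coefficient on the right is nonnegative while $4t^2>0$ for $t>0$, and the inequality holds for all $x\in\R$. Hence $(S\rho)(x)<0$ throughout $\R$, which completes the proposition.
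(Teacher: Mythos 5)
Your proof is correct, and it is worth noting that the paper itself offers no proof of this proposition at all: it is imported verbatim from the reference \cite{HT}. Your argument therefore cannot be compared line-by-line with the authors' reasoning, but as a self-contained verification it is sound and efficient: the cocycle rule $S(f\circ w)=(Sf\circ w)(w')^2+Sw$ together with M\"obius invariance (to discard the factor $\tau c$) reduces everything to the two constants-free computations $Sw\equiv -\tfrac12$ and $Sf(y)=6/(c^2+4y)^2$, and the final inequality $12t^2<(4t+c^2-4)^2$ for $t>0$ is exactly where $c\geq 2$ enters. All of these identities check out, as do the analyticity, the limits, and the sign of $\rho'$. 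One small remark: your computation gives $\rho(+\infty)=\tau c f(-1)=0.5\,\tau c\,(-c+\sqrt{c^2-4})$, which is indeed negative; the value printed in the proposition, $-0.5\,\tau c\,(-c+\sqrt{c^2-4})$, is positive as written and is evidently a sign typo in the statement, so your formula is the correct one and you should not describe it as ``the stated'' limit without flagging the discrepancy. You also rightly observe that $\tau>0$ must be assumed for strict monotonicity (and for the Schwarzian to be defined), a hypothesis the proposition leaves implicit.
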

It is straightforward to see that $\rho$ is a convex function:
$$
\rho''(x)= c\tau e^{-x}(f'(w(x)))^3(c^2-4+2e^{-x}) >0, \quad x \in \R. 
$$
\begin{corollary} \label{co22} If $c\geq 2$ then,  for all $x >0$, it holds that 
$$
\rho(x) > r(x):= \frac{\rho'(0)x}{1-0.5\rho''(0)x/\rho'(0)} = \frac{-\tau x}{1+0.5(1-2/c^2)x}. 
$$
\end{corollary}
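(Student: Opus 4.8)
The plan is to recognise $r$ as the second-order osculating M\"obius transform of $\rho$ at the origin and then to exploit the negative Schwarzian derivative of $\rho$ supplied by Proposition \ref{nsw}, via its standard reformulation: a $C^3$ function $\phi$ with $\phi'\neq 0$ satisfies $S\phi<0$ if and only if $|\phi'|^{-1/2}$ is strictly convex, while $S\phi\equiv 0$ (equivalently, $\phi$ is M\"obius) if and only if $|\phi'|^{-1/2}$ is affine. Since both $\rho'<0$ and $S\rho<0$ hold on all of $\R$, I would compare the two auxiliary functions $\eta(x):=(-\rho'(x))^{-1/2}$ and $\eta_r(x):=(-r'(x))^{-1/2}$: the first is strictly convex, the second is affine, and they are tangent at $0$, which forces $\eta>\eta_r$ for $x>0$ and hence the claimed inequality after a single integration.

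First I would record the second-order jet of $\rho$ at $0$. From $\rho(x)=\tau c\,f(w(x))$ one reads off $\rho(0)=0$ and $\rho'(0)=-\tau$, and evaluating the displayed formula for $\rho''$ at $x=0$ gives $\rho''(0)=\tau(1-2/c^2)$; by its very definition $r$ satisfies $r(0)=0$, $r'(0)=\rho'(0)$, $r''(0)=\rho''(0)$, so $r$ is tangent to $\rho$ to second order at the origin. Next I would verify, by logarithmic differentiation of $g:=-\rho'>0$, the differential identity
\[
\eta''(x)=-\tfrac12\,(S\rho)(x)\,\eta(x),\qquad \eta=(-\rho')^{-1/2}.
\]
Because $(S\rho)(x)<0$ and $\eta>0$ on $\R$, this yields $\eta''>0$, i.e.\ $\eta$ is strictly convex. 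A one-line computation gives $\eta_r(x)=(1+0.5(1-2/c^2)x)/\sqrt{\tau}$, which is affine; moreover the second-order matching at $0$ forces $\eta_r(0)=\eta(0)$ and $\eta_r'(0)=\eta'(0)$, so $\eta_r$ is exactly the tangent line of $\eta$ at the origin.

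Since a strictly convex function lies strictly above its tangent line away from the point of tangency, $\eta(x)>\eta_r(x)$ for every $x>0$; as $t\mapsto t^{-1/2}$ is decreasing this is equivalent to $-\rho'(x)<-r'(x)$, that is, $\rho'(x)>r'(x)$ for $x>0$. Integrating from $0$, where $\rho$ and $r$ agree, gives $\rho(x)-r(x)=\int_0^x(\rho'-r')\,dt>0$, which is the assertion. The only genuinely delicate point is the passage from the pointwise sign condition $S\rho<0$ to the \emph{global} strict convexity of $(-\rho')^{-1/2}$, combined with the identification of $\eta_r$ as its tangent line; once this is in place the proof reduces to a convex-above-tangent estimate followed by an integration. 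I would emphasise that both ingredients used, $\rho'<0$ and $S\rho<0$ throughout $\R$, are exactly what Proposition \ref{nsw} provides, and it is their global validity that lets the comparison run on all of $(0,\infty)$ without ever inverting the M\"obius map $r$, whose pole sits at the negative value $x=-2c^2/(c^2-2)$.
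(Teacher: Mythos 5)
Your argument is correct, and every computation checks out: $\rho(0)=r(0)=0$, $\rho'(0)=r'(0)=-\tau$, $\rho''(0)=r''(0)=\tau(1-2/c^2)$, the identity $\eta''=-\tfrac12(S\rho)\,\eta$ for $\eta=(-\rho')^{-1/2}$, the affineness of $(-r')^{-1/2}=(1+0.5(1-2/c^2)x)/\sqrt{\tau}$, and the observation that the pole of $r$ lies at the negative value $-2c^2/(c^2-2)$ so that the comparison runs unobstructed on all of $(0,\infty)$. The difference from the paper is that the authors do not argue at all: their proof is a one-line citation of Lemma 2.1 of \cite{lpprt}, which packages precisely the principle you derive (a function with $\rho'<0$ and $S\rho<0$ dominates its second-order osculating M\"obius approximation to the right of the tangency point). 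What your version buys is a self-contained proof of that black box via the standard reformulation of negative Schwarzian as strict convexity of $|\rho'|^{-1/2}$, followed by the convex-above-tangent estimate and one integration; this is almost certainly the same mechanism as in the cited lemma, but making it explicit clarifies exactly which hypotheses of Proposition \ref{nsw} are used (the global validity of $\rho'<0$ and $S\rho<0$, plus the second-order jet matching that defines $r$) and removes the dependence on the external reference. The only implicit assumption, shared with the paper, is $\tau>0$, without which $\rho\equiv r\equiv 0$ and the strict inequality is vacuous.
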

\begin{proof} It is an immediate consequence of Proposition \ref{nsw} and \cite[Lemma 2.1]{lpprt}. 
\end{proof}
Next, for each $c\geq 2, \ \tau \in (1,3/2]$,  we consider
$$
A_-(x,c,\tau) = x+ \rho(x) + \frac{1}{\rho(x)}\int^0_x\rho(s)ds,  \quad x\not=0;
$$
$$
A_+(x,c,\tau) = x+ r(x) + \frac{1}{r(x)}\int^0_xr(s)ds, 
\quad 
 B(x,c,\tau):= 
\frac{1}{r(x)}\int_{-r(x)}^{0}r(s)ds, \quad x > 0. 
$$
It is easy to see that $A_\pm, B$ are continuous at $x=0$ if we set  $A_\pm(0,c,\tau)  = B(0,c,\tau)= 0$.  Observe also that $B(x,c,\tau)$ is strictly decreasing on $\R_+$,  $$A_\pm'(0,c,\tau) = \frac 12 - \tau, \quad A_\pm''(0,c,\tau) = (\tau- \frac 16)(1-\frac{2}{c^2}),$$ 
\begin{eqnarray} \label{AAB}
A_-(x,c,\tau) &< & A_-(x,c,3/2), \   x < 0, \    \tau > 1;  \nonumber \\
A_+(x,c,\tau) &> & A_+(x,c,3/2), \   x > 0,  \  \tau > 1; \\
B(x,c,\tau) &>& B(x,c,3/2), \ x > 0,  \ \tau > 1. \nonumber
\end{eqnarray}
Let $x_2 >0$ be the unique positive solution of equation $-r(x)=x$.  Since $\tau >1$, it holds,  for a positive $x$,  that  $x/r(x) >-1$ if and only if  $x \in (0,  x_2)$.  As it was established in  \cite[Lemma 2.3]{lpprt}, $A_+(x,c,\tau)$ is strictly decreasing in the first variable on $(-\infty,x_2]$.  
The next result has a similar proof: 
\begin{lemma}
\label{31}
$A'_-(x,c, \tau)<0$ and $(SA_-)(x,c,\tau)<0$ once $x/\rho(x) >-1.$
\end{lemma}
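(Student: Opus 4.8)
The plan is to mimic the proof structure that the authors indicate was already used for the analogous statement about $A_+$ (cited as \cite[Lemma 2.3]{lpprt}), exploiting the crucial fact from Proposition~\ref{nsw} that $\rho$ has negative Schwarzian derivative on all of $\R$. The key conceptual point is that functions of the form $x \mapsto x + g(x) + \frac{1}{g(x)}\int_x^0 g(s)\,ds$ inherit strong qualitative properties (monotonicity and negative Schwarzian) from the underlying function $g$ when $g$ itself has negative Schwarzian; here $g = \rho$. So the main strategy is to reduce everything to properties of $\rho$ established in Proposition~\ref{nsw} (namely $\rho' < 0$, convexity $\rho'' > 0$, and $S\rho < 0$) together with the sign condition $x/\rho(x) > -1$.

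\emph{First step: monotonicity.} I would compute $A_-'(x,c,\tau)$ explicitly. Writing $I(x) := \int_x^0 \rho(s)\,ds$, so that $I'(x) = -\rho(x)$, differentiation gives
\[
A_-'(x,c,\tau) = 1 + \rho'(x) - \frac{\rho'(x)}{\rho(x)^2}I(x) + \frac{1}{\rho(x)}\cdot(-\rho(x)) = \rho'(x)\left(1 - \frac{I(x)}{\rho(x)^2}\right).
\]
Since $\rho'(x) < 0$ everywhere by Proposition~\ref{nsw}, establishing $A_-' < 0$ amounts to showing that the bracketed factor is positive, i.e. $\rho(x)^2 > I(x) = \int_x^0 \rho(s)\,ds$, precisely on the region $x/\rho(x) > -1$. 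The hypothesis $x/\rho(x) > -1$ should be exactly what controls the sign of $\rho$ and the size of the integral on the relevant interval; I expect to verify the bracket inequality by a convexity/monotonicity comparison of $\rho(x)^2$ against $I(x)$, using $\rho(0) = 0$ and the convexity $\rho'' > 0$ from the displayed formula right after Proposition~\ref{nsw}. This is where the geometry of the constraint $x/\rho(x) > -1$ does its work.

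\emph{Second step: negative Schwarzian.} For the Schwarzian claim I would invoke the general principle (the content of \cite[Lemma 2.1, Lemma 2.3]{lpprt}, which the paper already relies on) that the transformation $\rho \mapsto A_-$ preserves negativity of the Schwarzian derivative on the set where $A_-' \neq 0$. Concretely, since the Schwarzian is a cocycle under composition and behaves controllably under the additive/integral perturbation defining $A_-$, one reduces $(SA_-)(x,c,\tau) < 0$ to $(S\rho)(x) < 0$, which is guaranteed on all of $\R$ by Proposition~\ref{nsw}. Because the statement asserts that this holds under the same hypothesis $x/\rho(x) > -1$ and because the authors explicitly say ``the next result has a similar proof,'' I would structure this step to parallel the $A_+$ argument verbatim, replacing $r$ by $\rho$ throughout and noting that the sole property of $r$ used there—negative Schwarzian—holds for $\rho$ as well.

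The main obstacle I anticipate is the monotonicity bracket $\rho(x)^2 - \int_x^0 \rho(s)\,ds > 0$: unlike the Schwarzian part, which transfers almost formally from $\rho$, this inequality genuinely requires the boundary of the admissible region $x/\rho(x) = -1$ to coincide with the boundary of positivity. I would check this by differentiating $\Phi(x) := \rho(x)^2 - \int_x^0 \rho(s)\,ds$, obtaining $\Phi'(x) = 2\rho(x)\rho'(x) + \rho(x) = \rho(x)\bigl(2\rho'(x) + 1\bigr)$, and analyzing its sign together with the value $\Phi(0) = 0$; convexity of $\rho$ and the decreasing behavior established in Proposition~\ref{nsw} should pin down that $\Phi > 0$ exactly where $x/\rho(x) > -1$. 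If this direct approach proves delicate near $x = 0$ (where both $\rho(x)$ and the integral vanish), I would instead carry out a Taylor expansion at $x=0$ using the values $A_-'(0) = \tfrac12 - \tau$ and $A_-''(0) = (\tau - \tfrac16)(1 - \tfrac{2}{c^2})$ already recorded in the excerpt, to handle the removable-singularity case locally and glue it to the global sign analysis.
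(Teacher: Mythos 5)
Your first step is essentially on track, but the concrete plan you give for it does not close, and your second step is missing the key idea. For the monotonicity, the factorization $A_-'(x)=\rho'(x)\bigl(1-\rho^{-2}(x)\int_x^0\rho(s)\,ds\bigr)$ is correct; however, the sign of the bracket is not obtained by the $\Phi'(x)=\rho(x)(2\rho'(x)+1)$ analysis you propose. Since $\rho'\to 0$ and $\rho$ tends to a negative constant as $x\to+\infty$, the factor $2\rho'+1$ changes sign and $\Phi\to-\infty$, so the sign analysis degenerates into exactly the comparison you were trying to avoid; moreover $\Phi>0$ is \emph{not} equivalent to $x/\rho(x)>-1$ (the latter holds automatically for all $x<0$ because $\rho$ is convex with $\rho'(0)=-\tau<-1$, while $\Phi>0$ still has to be proved there). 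The actual argument is a one-liner: since $\rho$ is strictly decreasing with $\rho(0)=0$, one has $\int_x^0\rho(s)\,ds<-x\rho(x)$ for $x\neq 0$, whence
$1-\rho^{-2}(x)\int_x^0\rho(s)\,ds>1+x\rho(x)/\rho^2(x)=(\rho^2(x)+x\rho(x))/\rho^2(x)>0$
precisely where $x/\rho(x)>-1$; multiplying by $\rho'(x)<0$ gives $A_-'<0$, the point $x=0$ being covered by $A_-'(0)=1/2-\tau<0$.

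For the Schwarzian part, ``the transformation $\rho\mapsto A_-$ preserves negative Schwarzian'' is not a citable general principle --- it is the entire content of the claim; note also that in the $A_+$ case the base function $r$ is a M\"obius map with $Sr\equiv 0$, so ``negative Schwarzian of $r$'' cannot be what drives that argument. The missing idea is the integration by parts that exhibits $A_-$ as a genuine composition: with $\theta:=\rho^{-1}$ one gets $A_-(x)=G(\rho(x))$, where $G(z)=z+\int_0^1\theta(vz)\,dv$. Then the cocycle formula $S(G\circ\rho)=(SG)(\rho)(\rho')^2+S\rho$ applies, and since $S\rho<0$ everywhere (Proposition \ref{nsw}) it suffices to prove $(SG)(\rho(x))<0$ on the set where $G'(\rho(x))>0$ (which is where $A_-'<0$). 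This follows from $G'''(z)=\int_0^1 v^3\theta'''(vz)\,dv<0$, the inequality $\theta'''<0$ being itself a consequence of $S\rho<0$ and $\rho'<0$ via $\theta'''(\rho)=-(S\rho)/(\rho')^3+\tfrac32(\rho'')^2/(\rho')^5$; finally $SG\le G'''/G'<0$ wherever $G'>0$. Without this decomposition your outline does not produce a proof of $(SA_-)<0$.
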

\begin{proof} 
Using the convexity of $\rho$ and recalling that $-\rho'(0) =\tau>1, \ \rho(0)=0$, it is easy to see that  $x/\rho(x) >  -1$ if and only if  $x<\bar x_2$ where $\bar x_2$ is the unique positive solution of equation $-\rho(x) = x$. In consequence, $x\rho(x) +\rho^2(x) >0, \ x <\bar x_2, \ x\not=0$, 
$$
A_-'(x,c,\tau)=\rho'(x)\bigg(1-\frac{\int^0_x\rho(s)ds}{\rho^{2}(x)}\bigg) < \rho'(x)\bigg(1+\frac{x\rho(x)}{\rho^{2}(x)}\bigg) < 0,  \ x < \bar x_2, \ x\not=0.
$$
We know also $A_-'(0,c,\tau)=0.5-\tau <0$.
Now, integrating by parts, we obtain 
$$
A_-(x,c,\tau)= \rho(x)+\frac{x\rho(x)+\int_{\rho(x)}^{0}vd\theta(v)}{\rho(x)}= \rho(x)+\frac{1}{\rho(x)}\int_{0}^{\rho(x)}\theta(v)dv=G(\rho(x)),$$
where $\theta(v):=\rho^{-1}(v)$ and $ G(z)=z+\int_{0}^{1}\theta(vz)dv$. 

Then, by Proposition \ref{nsw} and the formula for the Schwarzian derivative of the composition of two functions, we obtain
 $$(SA_-)(x,c,\tau)=(SG)(\rho(x))(\rho'(x))^{2}+(S\rho)(x)<(SG)(\rho(x))(\rho'(x))^{2}.$$
Thus  the negativity of  $SA_-$ will follow from the inequality   $(SG)(\rho(x))<0$. Since $A_-'(x,c,\tau) <0$ if and only if $G'(\rho(x))>0$, 
it suffices to show that $(SG)(\rho(x))<0$ when
$G'(\rho(x))>0$.  Now, in view of  Proposition \ref{nsw}, 
$$
\theta'''(\rho(x))=\frac{3(\rho''(x))^2-\rho'''(x)\rho'(x)}{(\rho'(x))^5} = \frac{-(S\rho)(x)}{(\rho'(x))^3} + \frac 32\frac{(\rho''(x))^2}{(\rho'(x))^5} <0. 
$$
Hence, 
$G'''(z)=\int_{0}^{1}v^{3}\theta'''(vz)dv <0, \ z = \rho(x),$
and therefore 
$
(SG)(\rho(x))< $ $G'''(\rho(x))/G'(\rho(x)) <0.
$
This completes the proof of Lemma \ref{31}. 
\end{proof}
Next, for $c\geq 2, \ \tau \in (1,3/2]$, we will also consider  the functions 
$$
R(x,c,\tau)= \frac{A'_+(0,c,\tau)x}{1-0.5A''_+(0,c,\tau)x/A'_+(0,c,\tau)}, $$
$$
 D(x,c,\tau)= \left\{ \begin{array}{ll} A_-(x,c,\tau) & \textrm{if $x\leq 0$}, \\ A_+(x,c,\tau) & \textrm{if $x \in [0,x_2]$},
\\ B(x,c,\tau) & \textrm{if $x\geq x_2$}. \end{array} \right.
$$
As the above discussion shows, $D(x,c,\tau)$ is strictly decreasing in $x \in \R$.  
From now on, we fix $\tau=3/2$ and set $A_\pm(x,c):= A_\pm(x,c,3/2),$  $B(x,c):= B(x,c,3/2), $ \ $D(x,c):= D(x,c,3/2), \ R(x,c):= R(x,c,3/2)$.  The strictly decreasing function 
$D(x,c)$ has the following additional nice property: 
\begin{proposition} \label{old} If $c \geq 2$ then $D(x,c) > R(x,c)$ for all $x >0$. 
\end{proposition}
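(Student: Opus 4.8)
The plan is to prove the inequality separately on the two pieces into which the definition of $D(x,c)$ splits the half-line $x>0$, namely $D=A_+$ on $(0,x_2]$ and $D=B$ on $[x_2,\infty)$, and then to glue the two estimates at the junction $x_2$.

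On $(0,x_2]$ I would argue exactly as in Corollary \ref{co22}. The key observation is that $R(x,c)$ is nothing but the second order osculating rational function of $A_+(\cdot,c)$ at the origin: it shares with $A_+$ the value $0$, the first derivative $A_+'(0,c)=-1$ and the second derivative $A_+''(0,c)=\tfrac43(1-2/c^2)$ at $x=0$, so it stands to $A_+$ precisely as $r$ stands to $\rho$. Consequently, once we know that $A_+(\cdot,c)$ is strictly decreasing (which is \cite[Lemma 2.3]{lpprt}) and has negative Schwarzian derivative on $(0,x_2)$, \cite[Lemma 2.1]{lpprt} will immediately give $A_+(x,c)>R(x,c)$ there. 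The negativity of the Schwarzian is obtained by repeating the computation of Lemma \ref{31} with $\rho$ replaced by $r$: since $r$ is a M\"obius transformation we have $Sr\equiv0$, hence, writing $\theta_+:=r^{-1}$ and $G_+(z)=z+\int_0^1\theta_+(vz)\,dv$, one finds $(SA_+)(x,c)=(SG_+)(r(x))(r'(x))^2$ together with $\theta_+'''=\tfrac32 (r'')^2/(r')^5<0$; this yields $G_+'''<0$ and therefore $(SA_+)(x,c)<0$ on the set where $x/r(x)>-1$, that is on $(0,x_2)$.

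On $[x_2,\infty)$ a different idea is required, because $R$ is no longer the osculating function of $B$ (one checks $B'(0,c)=-9/8\neq-1$). Here I would pass to the variable $u:=-r(x)>0$, which increases from $x_2$ to $-r(+\infty)$ as $x$ runs over $[x_2,\infty)$, and set $t:=bu$ with $b:=\tfrac12(1-2/c^2)\ge\tfrac14$. A direct integration for $B$ and the substitution $x=u/(\tfrac32-bu)$ in $R$ turn both sides into elementary functions of the single variable $t$,
\[
B=-\frac{3}{2b}\Bigl(1-\frac{\ln(1+t)}{t}\Bigr),\qquad R=-\frac{6t}{b(9+2t)},
\]
and the admissible range of $t$ is exactly $[\tfrac12,\tfrac32)$, the endpoint $t=\tfrac12$ corresponding to $x=x_2$ (where $-r(x_2)=x_2$) and $t\to\tfrac32$ to $x\to+\infty$. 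After cancelling the positive factor $1/b$, the inequality $B>R$ becomes the $c$-independent scalar inequality
\[
1-\frac{\ln(1+t)}{t}<\frac{4t}{9+2t},\qquad t\in\Bigl[\tfrac12,\tfrac32\Bigr).
\]

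This last inequality is the real heart of the argument, and it is genuinely tight at $t=\tfrac12$, where the two sides differ by only about $10^{-2}$; any crude truncation of the logarithm is therefore useless. I would instead use the sharp lower bound $\ln(1+t)>2t/(2+t)$, valid for all $t>0$ and immediate upon differentiating. Substituting it, it remains to verify the rational inequality $\tfrac{4t}{9+2t}-1+\tfrac{2}{2+t}\ge0$, whose numerator over the common denominator $(9+2t)(2+t)$ equals $t(2t-1)$, nonnegative for $t\ge\tfrac12$; strictness of the logarithmic bound then upgrades this to a strict inequality on the whole interval, so $B(x,c)>R(x,c)$ for $x\ge x_2$. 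Finally, since $D$ is continuous and $A_+(x_2,c)=B(x_2,c)$, the strict inequality just obtained at $x_2$ extends the estimate of the second paragraph to the closed interval $(0,x_2]$; combining the two pieces yields $D(x,c)>R(x,c)$ for all $x>0$. The main obstacle is precisely this scalar inequality near the gluing point $t=\tfrac12$: its tightness forces the use of an optimal rational minorant for $\ln(1+t)$, after which the problem collapses to the elementary fact $t(2t-1)\ge0$.
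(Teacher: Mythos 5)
Your proof is correct, but it takes a genuinely different route from the paper's: the paper disposes of Proposition~\ref{old} in one line by citing \cite[Corollary 2.7]{lpprt} (merely noting that the $c$-dependence of the second derivatives at $0$ is harmless), whereas you reconstruct the whole argument from scratch. Your treatment of the piece $D=A_+$ on $(0,x_2]$ is the natural transplant of the paper's own technique: you verify that $R$ is the Mobius osculant of $A_+$ at $0$, obtain $(SA_+)<0$ by rerunning the computation of Lemma~\ref{31} with $\rho$ replaced by $r$ (where $Sr\equiv 0$ kills one term), and invoke \cite[Lemma 2.1]{lpprt} exactly as in Corollary~\ref{co22}. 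The piece $D=B$ on $[x_2,\infty)$ is where you add genuine value: the substitution $t=-br(x)$ with $b=\tfrac12(1-2/c^2)$ does reduce $B>R$ to the single $c$-free inequality $1-\ln(1+t)/t<4t/(9+2t)$ on $[\tfrac12,\tfrac32)$ (I checked the algebra: $B=-\tfrac{3}{2b}(1-\ln(1+t)/t)$, $R=-6t/(b(9+2t))$, and $t=\tfrac12$ at $x=x_2$ since $bx_2=\tfrac12$), and the Pad\'e minorant $\ln(1+t)>2t/(2+t)$ collapses it to $t(2t-1)\ge 0$, which is exactly tight at the gluing point. The identity $A_+(x_2,c)=B(x_2,c)$ that you use to cover the endpoint is also correct, since $x_2+r(x_2)=0$. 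What your version buys is a self-contained, checkable proof that makes transparent \emph{why} the parameter $c$ drops out (it cancels in the ratio $B/R$ after rescaling); what the paper's citation buys is brevity and the reassurance that the estimate is an instance of a general scheme already worked out in \cite{lpprt}.
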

\begin{proof} The above inequality follows from \cite[Corollary 2.7]{lpprt} if we take there ${\frak f}'(0)= r'(0)= - \tau =-3/2$. It should be observed that the definitions of functions $A, B, r, R$ in \cite{lpprt}  are identical to the definitions of  $A_+, B, r, R$ in this paper.   The only formal difference with \cite{lpprt} is the presence of  parameter $c$ in the expressions for the second derivatives of $A_+, r, R$ at $0$.  However, once these derivatives are positive, the proofs in \cite{lpprt} do not matter on their exact values , e.g. see Lemma 2.6 from \cite{lpprt}. 
\end{proof}
\begin{corollary} \label{fi} $F(x):= A_-(R(x)) <x$ for all $x >0$.  
\end{corollary}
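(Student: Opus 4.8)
The plan is to exploit two features special to $\tau=3/2$: that $R(\cdot,c)$ is then a strictly decreasing Möbius \emph{involution}, and that it is simultaneously the second-order Möbius approximant of $A_-(\cdot,c)$ at the origin. First I would record the second-order data at $0$. Since $A_+'(0,c)=A_-'(0,c)=\frac12-\tau=-1$ and $A_-''(0,c)=A_+''(0,c)=(\tau-\frac16)(1-\frac{2}{c^2})>0$, the Möbius function $R(x,c)=\dfrac{-x}{1+\frac12 A_+''(0,c)\,x}$ satisfies $R(0,c)=A_-(0,c)=0$, $R'(0,c)=A_-'(0,c)$ and $R''(0,c)=A_-''(0,c)$; as $SR\equiv0$, this identifies $R(\cdot,c)$ as the Möbius approximant of $A_-(\cdot,c)$ at $0$ in the sense of \cite[Lemma 2.1]{lpprt}. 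Moreover, because its linear coefficient equals $-1$, a direct computation gives $R(R(x,c),c)=x$; thus $R(\cdot,c)$ is a strictly decreasing involution with unique fixed point $0$, mapping $(0,\infty)$ onto the interval $(-2/A_+''(0,c),0)$ and satisfying $R(x,c)<0$ for $x>0$.

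Next I would use the involution to reduce the claim to a one-sided comparison on the negative half-line. For $x>0$ set $y:=R(x,c)<0$; then $x=R(y,c)$ by the involution, so that
$$
F(x)=A_-(R(x,c),c)=A_-(y,c),\qquad x=R(y,c).
$$
Consequently $F(x)<x$ for all $x>0$ is equivalent to the inequality $A_-(y,c)<R(y,c)$ for every $y$ in the image $(-2/A_+''(0,c),0)$, an interval lying strictly between the pole of $R(\cdot,c)$ and the tangency point $0$.

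It then remains to prove $A_-(y,c)<R(y,c)$ on that interval. Here I would appeal to Lemma \ref{31}: for $y\le0$ one has $y/\rho(y)>-1$ --- indeed, convexity of $\rho$ with $\rho(0)=0$, $\rho'(0)=-3/2$ forces $\rho(y)>-y>0$ for $y<0$ --- so $A_-(\cdot,c)$ is strictly decreasing with strictly negative Schwarzian on $(-\infty,0]$. The same comparison principle that yields Corollary \ref{co22} and Proposition \ref{old}, namely \cite[Lemma 2.1]{lpprt}, applied now to $A_-(\cdot,c)$ and its Möbius approximant $R(\cdot,c)$, shows that $A_-(\cdot,c)-R(\cdot,c)$ keeps a constant sign on $(-2/A_+''(0,c),0)$. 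To pin down that sign I would expand at $0$: since the second-order jets agree, $A_-(y,c)-R(y,c)\sim\frac16\big(A_-'''(0,c)-R'''(0,c)\big)y^3$, and a short computation using $SR\equiv0$ together with $A_-'(0,c)=-1$ gives $A_-'''(0,c)-R'''(0,c)=-\,(SA_-)(0,c)>0$. Hence $A_-(y,c)<R(y,c)$ for $y<0$, and combining with the reduction above completes the proof.

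The main obstacle is conceptual rather than computational: one has to notice that $\tau=3/2$ is precisely the value forcing $A_-'(0,c)=-1$, which simultaneously turns $R(\cdot,c)$ into an involution and identifies it as the Möbius approximant of $A_-$ (not merely of $A_+$) at $0$. Once these two observations are in place, the positive-side statement of Proposition \ref{old} is replaced by the negative-side bound needed here, and the latter is a routine instance of the negative-Schwarzian comparison already invoked twice in this section; the only point requiring care is that the relevant values $y=R(x,c)$ all lie to the right of the pole of $R(\cdot,c)$, so that the comparison applies without interruption.
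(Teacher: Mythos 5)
Your proof is correct, and it reaches the conclusion by a genuinely different organization of the same underlying mechanism. The paper works directly with the composite $F=A_-\circ R$ on the positive half-line: it computes $SF<0$ from Lemma \ref{31} and $SR\equiv 0$, checks the third-order jet $F'(0)=1$, $F''(0)=0$, $F'''(0)=(SA_-)(0)<0$, and then runs a bespoke ``leftmost fixed point'' argument ($F(z)=z$ would force an interior positive minimum $p$ of $F'$ with $F''(p)=0$, $F'''(p)\geq 0$, contradicting $SF<0$). You instead exploit the observation --- not made explicit in the paper --- that $\tau=3/2$ gives $A_\pm'(0)=-1$, so that $R(x)=-x/(1+\tfrac12 A_+''(0)x)$ is an involution which is simultaneously the second-order M\"obius approximant of $A_-$ (not only of $A_+$) at the origin; conjugating by $R$ converts $F(x)<x$ on $(0,\infty)$ into $A_-(y)<R(y)$ on the image interval $(-2/A_+''(0),0)$, i.e.\ into a negative-side twin of Proposition \ref{old}. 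Your domain checks (that $y/\rho(y)>-1$ for $y\le 0$ so Lemma \ref{31} applies, and that the relevant $y$ stay to the right of the pole of $R$) and your sign computation $A_-'''(0)-R'''(0)=-(SA_-)(0)>0$ are all correct. The one point to flag is the step ``$A_--R$ keeps a constant sign,'' which you delegate to \cite[Lemma 2.1]{lpprt}: that lemma is quoted in Corollary \ref{co22} only for the positive half-line, and if its statement does not literally cover the negative side you must re-derive it --- which amounts to applying to $h=R\circ A_-$ exactly the Schwarzian fixed-point argument the paper applies to $F$. So the two proofs are equivalent at bottom; what your route buys is a conceptual explanation (the involution) of why the corollary is the mirror image of Proposition \ref{old}, at the cost of leaning on an external lemma whose two-sidedness should be verified.
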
 
\begin{proof} 
By Lemma \ref{31},  $(S F)(x)= (SA_-)(R(x))(R'(x))^{2} <0$ for all $x$ from some open neighbourhood of $[0,+\infty)$.  Also  $SR\equiv 0$,  so that 
 $$0= (S R)(0)= -R'''(0) - 1.5 (R''(0))^2 = -R'''(0) - 1.5(A_-''(0))^2 .$$
Next, we  have that  
$F'(0) = -R'(0) =1,  \ F''(0)= A_-''(0)(R'(0))^2+ A_-'(0)R''(0)=0,$
$$F'''(0)= A_-'''(0)(R'(0))^3 + 3A_-''(0)R'(0)R''(0) + A_-'(0)R'''(0) =  (SA_-)(0) <0. 
$$
Therefore $F(x) <x$ for all small positive $x$.  Now, suppose that $F(z)=z$ for some leftmost positive $z$. Then $F'(z) \geq 1$ and therefore function $y=F'(x)>0,$ $ x \in[0,z],$ $ F'(0)=1,$ has a positive local minimum at some point $p \in (0,d)$. But then $F''(p)=0, \ F'''(p) \geq 0$, and in this way 
$(SF)(p) \geq 0$, a contradiction.   
\end{proof}

\section{Bounding relations and  the convergence of  semi-wavefronts} \label{BF}
As we have mentioned  in the introduction,  
 for each $c\geq 2$, equation  (\ref{17}) 
has at least one positive wave $u = \phi(\nu \cdot  x +ct),$ $|\nu| =1$, satisfying the boundary conditions $\phi(-\infty)=0$, $0 < \lim\inf_{t\to+\infty} \phi(t) \leq \lim\sup_{t\to+\infty} \phi(t) < +\infty$.  Clearly, $\phi$ satisfies
\begin{eqnarray} \label{twe2a}
\phi''(t) - c\phi'(t) + \phi(t)(1-
\phi(t-h)) =0,  \ h := c\tau, \ t \in \R. \end{eqnarray}
The change of variables $\phi(t) = e^{-x(t)}$ transforms  the latter equation  into  
\begin{equation} \label{et}
x''(t) - cx'(t) - (x'(t))^2 +(e^{-x(t-h)}-1)=0, \ t \in \R. 
\end{equation}
By Theorem 4 from \cite{HT},  $x(t)$ is sine-like oscillating 
around $0$.  More precisely, there exists an increasing  sequence $Q_j, \ j \geq 0,$ of zeros of $x(t)$ such that $x(t)<0$ on 
$(Q_0,Q_1)\cup(Q_2,Q_3) \cup \dots$ and $x(t)>0$ on 
$(-\infty, Q_0)\cup (Q_1,Q_2)\cup(Q_3,Q_4) \cup \dots$ Furthermore, $x(t)$ has exactly one critical point (hence, local extremum point) $T_j$ on each  interval $[Q_j,Q_{j+1}]$ and $T_j-Q_j <h$ for all $j$.  Hence, $y(t):=x'(t)$ does not change its sign on the intervals $(T_{j},T_{j+1}), \ j=0,1,2\dots$ and $y(T_j)=0$.  Therefore $y$ solves the boundary value problem 
\begin{equation} \label{rfc}
y'=y^2 +cy- g(t),\quad  y(T_j)=y(T_{j+1})=0,  
\end{equation}
where $c \geq 2$ and $g(t):= w(x(t-h))$ is $C^2$-smooth on $\R$.  
\begin{lemma}\label{L20}  For each integer $j\geq 0$, solution  
$y(t)$ has a unique critical point (absolute minimum point) $p_j \in [T_{2j+1},T_{2j+2}],$ and, 
for all $t \in (p_j,T_{2j+2})$, it holds  
$
y(t) > \rho(x(t-h))/h. 
$
Furthermore, for each non-increasing function $M=M(t),$ $t \in [Q_{2j},Q_{2j+2}],$ such that 
$x(t) \leq M(t),\  t \in [Q_{2j},Q_{2j+2}]$, it holds 
\begin{equation}\label{iM}
y(t) > \rho(M(t-h))/h,\  t \in  (T_{2j+1},T_{2j+2})\setminus\{p_j\}.
\end{equation}
Similarly,  for $j\geq 1$, solution  
$y(t)$ has a unique critical point (absolute maximum point) $q_j \in [T_{2j},T_{2j+1}],$ and, 
for all $t \in (q_j,T_{2j+1})$, it holds  
$
y(t) < \rho(x(t-h))/h. 
$
Furthermore, for each non-decreasing function $m=m(t), t \in [Q_{2j-1},Q_{2j+1}]$, such that 
$x(t) \geq m(t),\  t \in [Q_{2j-1},Q_{2j+1}]$, it holds 
$$
y(t) <   \rho(m(t-h))/h,\  t \in  (T_{2j},T_{2j+1})\setminus\{q_j\}. 
$$
\end{lemma}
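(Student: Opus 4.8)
The plan is to read (\ref{rfc}) as a scalar Riccati equation and to extract the bounds from its (moving) nullclines, the key algebraic observation being that $\rho(x(t-h))/h$ is one of them. Writing $g(t)=w(x(t-h))$ and $Y_{\pm}(t)=\frac12\left(-c\pm\sqrt{c^{2}+4g(t)}\right)$, one has $y^{2}+cy-g=(y-Y_{+})(y-Y_{-})$, and since $h=c\tau$ and $f$ solves $z^{2}+cz=x$,
$$
Y_{+}(t)=\frac{-c+\sqrt{c^{2}+4g(t)}}{2}=f(w(x(t-h)))=\frac{\rho(x(t-h))}{h}.
$$
Because $g(t)>-1$ and $c\geq2$, the discriminant $c^{2}+4g$ is positive, so $Y_{-}<Y_{+}$ are genuine roots; thus $y'>0$ outside the strip $[Y_{-},Y_{+}]$ and $y'<0$ inside it. On $[T_{2j+1},T_{2j+2}]$ we have $y<0$ on the open interval with $y(T_{2j+1})=y(T_{2j+2})=0$; a short argument (the solution cannot touch the lower root, which stays below $-c/2$) shows $y>Y_{-}$ throughout, so that every critical point of $y$ is a zero of $d:=y-\rho(x(\cdot-h))/h$.

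I would then classify these zeros via the delayed feedback. Differentiating $Y_{+}^{2}+cY_{+}=g$ gives $Y_{+}'=g'/\sqrt{c^{2}+4g}$ with $g'(t)=-e^{-x(t-h)}y(t-h)$, so at a zero of $d$ one computes $d'=-Y_{+}'=e^{-x(t-h)}y(t-h)/\sqrt{c^{2}+4g}$; equivalently, at a critical point $y''=-g'=e^{-x(t-h)}y(t-h)$. Hence $d$ can cross $0$ upward (a minimum of $y$) only where $y(t-h)>0$, and downward (a maximum) only where $y(t-h)<0$. Here the geometry of the oscillation enters: combining $T_j-Q_j<h$ with the slow-oscillation spacing of the zeros $Q_j$, the shifted set $\{t-h:\ t\in(T_{2j+1},T_{2j+2})\}$ splits into an initial part lying in the increasing range $(T_{2j},T_{2j+1})$ of $x$ (where $y(\cdot-h)>0$, so only upward crossings are possible) and a final part lying in the decreasing range (where only downward crossings are possible). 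Since one finds $d(T_{2j+1})<0<d(T_{2j+2})$, the sign rule forces exactly one crossing, necessarily upward, at a point $p_j$ in the first part: this is the unique critical point, and $d>0$ on $(p_j,T_{2j+2})$, which is precisely the asserted inequality $y(t)>\rho(x(t-h))/h$. Pinning down this placement of $t-h$ is the step I expect to be the main obstacle.

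For (\ref{iM}) I would pass to a comparison argument. Put $\mu(t):=\rho(M(t-h))/h$; as $M$ is non-increasing and $\rho$ decreasing, $\mu$ is non-decreasing, and $x\le M$ gives $\mu^{2}+c\mu=w(M(\cdot-h))\le w(x(\cdot-h))=g$. Therefore $\mu'\ge0\ge\mu^{2}+c\mu-g$, i.e. $\mu$ is a supersolution of (\ref{rfc}). With $e:=y-\mu$ this yields $e'\le(y+\mu+c)\,e$, so $t\mapsto e(t)\exp(-\int(y+\mu+c))$ is non-increasing and positivity of $e$ propagates \emph{backward}. Taking a base point just to the right of $p_j$, where $y>\rho(x(\cdot-h))/h\ge\mu$ by the previous step and monotonicity of $\rho$, I would carry $e>0$ backward across $(T_{2j+1},p_j)$ and combine it with $y>\rho(x(\cdot-h))/h\ge\mu$ on $(p_j,T_{2j+2})$ to obtain (\ref{iM}) on all of $(T_{2j+1},T_{2j+2})\setminus\{p_j\}$ (equality at $p_j$ being possible only when $x(p_j-h)=M(p_j-h)$). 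The maximum-point statements follow from the same scheme applied to $[T_{2j},T_{2j+1}]$, where $y>0$: now $d$ has a single downward crossing $q_j$, the function $\nu(t):=\rho(m(t-h))/h$ is a \emph{sub}solution because $m$ is non-decreasing and $x\ge m$, and the inequality $y<\nu$ propagates backward in the same way, with $m$, $M$ and the two nullclines interchanged.
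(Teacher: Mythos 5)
Your proposal is correct, and for the main assertion it is essentially the paper's proof in different clothing: both identify $\rho(x(t-h))/h$ as the upper zero-isocline $\lambda_2$ of the Riccati equation \eqref{rfc}, use the sign of $y'$ relative to the bands cut out by the isoclines, and control crossings of $\lambda_2$ through the sign of $g'(t)=-x'(t-h)e^{-x(t-h)}$. The paper organizes this as ``take the maximal interval of increase $(p_j,T_{2j+2})$ and show the trajectory cannot re-cross the graph of $\lambda_2$ to the left of $p_j$ because $\lambda_2$ is strictly decreasing there,'' whereas you count transversal crossings of $d=y-\lambda_2$ between $d(T_{2j+1})<0$ and $d(T_{2j+2})>0$; the mechanism is the same. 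The step you flag as the main obstacle --- locating $t-h$ --- is settled in the paper not by the containment $(T_{2j+1}-h,\,T_{2j+2}-h)\subset(T_{2j},T_{2j+2})$ that you invoke (this does not follow from $T_j-Q_j<h$ together with slow oscillation alone), but by the weaker and sufficient facts $T_{2j+1}-h\in(Q_{2j},Q_{2j+1})$ and $T_{2j+2}-h\in(Q_{2j+1},Q_{2j+2})$, read off from the sign of $y'(T_j)=-g(T_j)$ at the extrema of $x$; note that your crossing count survives even if an initial piece of the shifted interval lands where $y(\cdot-h)<0$, since $d$ starts negative there and only downward crossings are then permitted, so no crossing can occur. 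For \eqref{iM} you genuinely diverge: you turn $\mu=\rho(M(\cdot-h))/h$ into a supersolution of \eqref{rfc} and propagate $y>\mu$ backward via a Gronwall-type inequality, while the paper simply observes that on $[T_{2j+1},p_j)$ the solution $y$ is strictly decreasing while $\rho(M(\cdot-h))/h$ is non-decreasing, and that the two are already correctly ordered at $p_j$ by part i); both routes work, the paper's being shorter and requiring no differentiability of $M$ (which in the subsequent application is only piecewise linear).
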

\begin{proof} We will prove only the first  assertion of the lemma, the proof of the second statement being completely analogous.   So let us
consider the slope field for differential equation (\ref{rfc}).  Two zero isoclines
$$
\lambda_1(t) = \frac{-c- \sqrt{c^2+4g(t)}}{2} < -\frac c2 < \frac{-c+ \sqrt{c^2+4g(t)}}{2}:=\lambda_2(t)
$$
partition the plane $\R^2$ into three horizontal bands 
$$\Pi_1= \{(t,y): y\leq \lambda_1(t)\},  \Pi_2= \{(t,y): \lambda_1(t)\leq  y\leq \lambda_2(t)\},  \Pi_3= \{(t,y): y\geq  \lambda_2(t)\},  $$
limited by the graphs of  functions $y= \lambda_1(t), y= \lambda_2(t)$.  We observe that 
the portions of integral curves of (\ref{rfc}) belonging to the interior of domains $\Pi_1, \Pi_3$ [respectively, $\Pi_2$] are 
increasing [respectively, decreasing].  Since $y(T_{2j+2})=0$ and $g(T_{2j+2}) = \exp(-x(T_{2j+2}-h))- 1  < 0$ we find that 
$(T_{2j+2},0) \in {\rm Int}\, \Pi_3$, where Int\,$X$ denotes the interior part of the set $X$.   Similarly, $(T_{2j+1},0) \in {\rm Int}\, \Pi_2$ while the points $T_{2j+1}$ and $T_{2j+2}$ are separated by a unique zero $Q_{2j+1}+h$ of $y = \lambda_2(t)$ on $[T_{2j+1}, T_{2j+2}]$. As a consequence, 
the integral curve of each function $y(t)$ solving  (\ref{rfc}) never enters $\Pi_1$ and belongs to $\Pi_2\cup\Pi_3$.  Moreover, it is clear that $y'(t)>0$  on some maximal interval $(p_j, T_{2j+2})$ where $y(p_j) = \lambda_2(p_j), \ y'(p_j)=0$. Since clearly $0\leq y''(p_j) = -g'(p_j)$, the point 
$(p_j, \lambda_2(p_j))$  lies on the decreasing part of the graph $\Gamma$ of $y = \lambda_2(t)$ (observe that  $\lambda_2'(t)= g'(t)/ \sqrt{c^2+4g(t)})$. We claim that $(t,y(t))$ does not  cross 
$\Gamma$ again for all $t \in [T_{2j+1},p_j)$.  Indeed, otherwise there exists some $d \in  [Q_{2j+1}+h,p_j)$ such that $y(d)= \lambda_2(d)$ and therefore $y'(d)=0$ while $\lambda'(d_2) =  g'(d)/ \sqrt{c^2+4g(d)} <0$ since $g'(t) = -x'(t-h)\exp(-x(t-h)) <0,$  $t  \in  [Q_{2j+1}+h,p_j), \ g(p_j) \leq 0$.  This means that at the moment $t=d$ the integral curve of the solution $y=y(t)$ intersects  transversally $\Gamma$, enters  the domain $\Pi_3$ and is strictly increasing on $(d,p_j)$. Since $y=\lambda_2(t)$ is strictly decreasing on the same interval, we get a contradiction: $y(p_j) > \lambda_2(p_j)$.  

Hence, we have  the following description of the behaviour 
of each solution $y(t)$ to (\ref{rfc}) on  $[T_{2j+1},T_{2j+2}]$: there exists a point $p_j \in (T_{2j+1},T_{2j+2})$ such that 
\begin{enumerate}
\item[i)] $y'(t) >0, \ y(t) > \lambda_2(t)=  \rho(x(t-h))/h, \quad t \in (p_j,  T_{2j+2}]$; 
\item[ii)]  $y'(p_j) =0, \ y(p_j) = \lambda_2(p_j)$; 
\item[iii)] $y'(t) <0, \ y(t) < \lambda_2(t), \quad t \in [T_{2j+1}, p_j)$. 
\end{enumerate}
Finally, in order to justify  (\ref{iM}), we observe that $[Q_{2j},Q_{2j+2}] \supset [T_{2j+1}-h,T_{2j+2}-h]$. Therefore, since $\rho$ decreases on $\R$, we obtain that $\rho(x(t-h)) \geq \rho(M(t-h))$ for $t \in  [T_{2j+1},T_{2j+2}]$. Thus
the property i) implies (\ref{iM}) for all $t \in (p_j,  T_{2j+2}]$.  In particular, 
$
y(p_j) \geq \rho(M(p_j-h))/h.
$
Since, in addition,  $y(t)$ is strictly decreasing on 
$ [T_{2j+1},p_j)$,  $\rho(M(t-h))$ is non-decreasing on the same interval,  we conclude that (\ref{iM}) also holds for all $t \in [T_{2j+1},p_j)$. 
This completes  the proof of Lemma \ref{L20}.
\end{proof}
\begin{remark} For the oscillating semi-wavefront solutions  $x = x(t)$ of  equation (\ref{et}), the above result improves considerably the estimations of Lemma 20 from \cite{HT}. In order to obtain such an  improvement, here we have used  our knowledge of  slowly oscillating behaviour  of $g(t)$: this information was not relevant for the proof of Lemma 20. 
\end{remark}
\begin{corollary}
The profiles of oscillating semi-wavefronts to equation (\ref{17}) have a unique inflection point between each 
two consecutive extremum points. 
\end{corollary}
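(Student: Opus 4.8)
The plan is to read the claim through the substitution $\phi=e^{-x}$, $y=x'$ of Section~\ref{BF}. Differentiating twice and using (\ref{rfc}) gives $\phi''=(y^2-y')\phi=(g-cy)\phi$, so, since $\phi>0$, the inflection points of a profile $\phi$ are exactly the sign changes of the smooth function $H:=g-cy$, while the interior extrema of $\phi$ are the critical points $T_j$ of $x$. I would run the argument on a window $(T_{2j+1},T_{2j+2})$, on which, by Lemma~\ref{L20}, $y<0$ with a single minimum at $p_j$; the window $(T_{2j},T_{2j+1})$ is handled symmetrically through the second assertion of Lemma~\ref{L20}.

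First I would establish existence and localization of the inflection. The slope-field analysis in the proof of Lemma~\ref{L20} places $(T_{2j+1},0)\in\mathrm{Int}\,\Pi_2$ and $(T_{2j+2},0)\in\mathrm{Int}\,\Pi_3$, which is precisely $g(T_{2j+1})>0$ and $g(T_{2j+2})<0$; as $y$ vanishes at both endpoints, $H(T_{2j+1})>0>H(T_{2j+2})$ and $H$ changes sign at least once. Next, at any inflection point $cy=g$, so $g<0$ there (because $y<0$); a short convexity computation shows $g/c\geq\lambda_2$ with equality only when $g=0$, whence every interior inflection point satisfies $y>\lambda_2$ and thus lies in the sub-window $(p_j,T_{2j+2})$, on which $y$ is strictly increasing.

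For uniqueness I would show that $\phi''$ can only cross zero downward. At an inflection point $t^*$ one has $y'(t^*)=y(t^*)^2$ (from $cy=g$ and (\ref{rfc})), and using $g'(t)=-y(t-h)\phi(t-h)$ one computes $\phi'''(t^*)=-\bigl(c\,y(t^*)^2+y(t^*-h)\phi(t^*-h)\bigr)\phi(t^*)$. Hence it suffices to prove, at every inflection point, the inequality $c\,y(t^*)^2+y(t^*-h)\phi(t^*-h)>0$; granting it, consecutive zeros of $\phi''$ would both be downward crossings, which is impossible, so the sign change is unique. Since $y(t^*)=(\phi(t^*-h)-1)/c$, the first term equals $(1-\phi(t^*-h))^2/c>0$, so the inequality is immediate whenever $y(t^*-h)\geq0$. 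The hard part, and the main obstacle, is the complementary case $y(t^*-h)<0$: here I would exploit the slow oscillation of $x$ --- the gap bounds $T_j-Q_j<h$ together with the localization $t^*\in(p_j,T_{2j+2})$ --- to show that the delayed argument $t^*-h$ in fact lands on the rising flank $(Q_{2j+1},T_{2j+1})$ of the preceding hill, forcing $y(t^*-h)\geq0$; failing a clean sign argument, one would instead bound $|y(t^*-h)|\,\phi(t^*-h)$ below $(1-\phi(t^*-h))^2/c$ using the amplitude estimates of Lemma~\ref{L20} and the auxiliary functions of Section~\ref{AF}. Controlling this delayed slope is the most delicate step of the proof.
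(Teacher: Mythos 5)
The paper offers no argument at all for this corollary: it is meant to be read off directly from Lemma~\ref{L20}, whose properties i)--iii) show that $y=x'$ has exactly one critical point ($p_j$, resp.\ $q_j$) on each interval between consecutive $T_j$'s, with $y'=x''$ changing sign exactly once there --- i.e.\ the log-transformed profile $x=-\ln\phi$ has a unique inflection point between consecutive extrema. You instead read the statement literally for $\phi$, and you correctly observe that this is a \emph{different} assertion: since $\phi''=(g-cy)\phi$, the inflection points of $\phi$ are the zeros of $H:=g-cy$, not of $y'$. Within that reading, your existence and localization steps are sound ($H(T_{2j+1})>0>H(T_{2j+2})$, and $g/c\ge\lambda_2$ together with property iii) of Lemma~\ref{L20} pushes every zero of $H$ into $(p_j,T_{2j+2})$), and so is the reduction of uniqueness to the inequality $c\,y(t^*)^2+y(t^*-h)\phi(t^*-h)>0$ at each zero $t^*$.

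The problem is that this central inequality is not proved, as you yourself concede, in the only nontrivial case $y(t^*-h)<0$. Your first proposed repair --- that $t^*-h$ always lands on the rising flank $(Q_{2j+1},T_{2j+1})$ --- would require $t^*<T_{2j+1}+h$ for every zero $t^*\in(p_j,T_{2j+2})$, hence essentially $T_{2j+2}-T_{2j+1}\le h$; the paper gives only $T_j-Q_j<h$ and places no upper bound on the gaps $Q_{j+1}-Q_j$, so this is not available. Your second repair (bounding $\lvert y(t^*-h)\rvert\,\phi(t^*-h)$ by $(1-\phi(t^*-h))^2/c$) is not carried out, and it is not clear that the amplitude estimates of Lemma~\ref{L20} suffice, since the right-hand side degenerates as $t^*-h$ approaches $Q_{2j+2}$ while the left-hand side need not. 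So the proposal leaves its key step open; the statement it would prove (uniqueness of inflection points of $\phi$) is strictly stronger than what Lemma~\ref{L20} delivers for free, which is uniqueness of the inflection point of $x$ at the points $p_j$, $q_j$.
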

In the next stage of our studies,  we will  evaluate the extremal values $V_j=x(T_j)$ for $j\geq 1$ 
(it follows from \cite[Corollary 16]{HT} that  $V_0 \geq -ch$). 
\begin{lemma} \label{ogran} Let $c\geq 2, \ \tau \in (1,3/2]$ and $x(t) = -\ln \phi(t)$ oscillates on $[Q_0, +\infty)$.  Then \quad
$
V_{2j+1} \leq A_-(V_{2j},c,\tau), \ j \geq 0, $\
$V_{2j} \geq   B(V_{2j-1},c,\tau), \ j \geq 1. $  If, in addition,  $  V_{2j-1} \leq x_2,$ then  $V_{2j} \geq  A_+(V_{2j-1},c,\tau), \ j \geq 1. $
\end{lemma}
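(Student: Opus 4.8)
The plan is to derive each of the three inequalities by integrating the identity $x'(t)=y(t)$ across the monotone interval of $x$ joining the two consecutive extrema in question, and inserting the pointwise estimates of Lemma \ref{L20}. For the upper bound $V_{2j+1}\le A_-(V_{2j},c,\tau)$ I would work on the rising interval $[T_{2j},T_{2j+1}]$, where $x$ increases from $V_{2j}$ to $V_{2j+1}$ and $y>0$, using the second assertion $y(t)<\rho(m(t-h))/h$. For the two lower bounds $V_{2j}\ge B(V_{2j-1},c,\tau)$ and $V_{2j}\ge A_+(V_{2j-1},c,\tau)$ I would work on the falling interval $[T_{2j-1},T_{2j}]$, where $x$ decreases from $V_{2j-1}$ to $V_{2j}$ and $y<0$, using the first assertion $y(t)>\rho(M(t-h))/h$. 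The simplest admissible minorant/majorant is the constant equal to the relevant extremal value ($m\equiv V_{2j}$, resp.\ $M\equiv V_{2j-1}$); taking it constant is precisely what lets Lemma \ref{L20} control $y$ on the whole interval rather than only past the turning points $q_j$ (resp.\ $p_{j-1}$).

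The delay is the real issue, and everything hinges on locating the retarded argument $t-h$ relative to the zeros $Q_k$. From the inequality $T_k-Q_k<h$ one checks that along the rising interval the argument $t-h$ stays below $Q_{2j+1}$, so $x(t-h)$ is eventually negative and $\rho(x(t-h))>0$; this is why the upper estimate is naturally expressed through $\rho$ itself and assembles into $A_-$. Dually, along the falling interval $t-h$ stays below $Q_{2j}$ and $x(t-h)$ is eventually positive, so I would replace $\rho$ by its convex lower bound $r$ supplied by Corollary \ref{co22}; this substitution is exactly what converts the raw estimate into the functions $A_+$ and $B$.

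To turn the time integral into the closed forms I would split each monotone interval into a \emph{memory} phase of length $h$ immediately following the extremum and a \emph{tracking} phase. On the memory phase $t-h$ lies on the far side of the turning point, $x(t-h)$ is controlled by the constant extremal value, and the contribution is bounded by the single term $\rho(x)$ (resp.\ $r(x)$). On the tracking phase $t-h$ runs monotonically along the current branch, and changing the variable from $t$ to $x$ converts the remaining integral into $\tfrac{1}{\rho(x)}\int_x^0\rho(s)\,ds$ (resp.\ the analogue with $r$), which reproduces the definitions of $A_-$ and $A_+$. The estimate $B$ is the coarser bound one gets when the tracking refinement is dropped, whereas $A_+$ is the sharper one and is available precisely when $V_{2j-1}\le x_2$, i.e.\ when $x/r(x)>-1$, the range on which (as recalled just before Lemma \ref{31}) $A_+$ is still strictly decreasing; for $V_{2j-1}>x_2$ one must fall back on $B$. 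This last computation is exactly the one performed for the abstract functions in \cite{lpprt}, so I would invoke it rather than repeat it, matching the present $\rho,r$ to the generic profile there as in the proof of Proposition \ref{old}.

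The main obstacle is therefore not the integration but the delay bookkeeping: verifying the mutual ordering of the points $T_k\pm h$, $Q_k+h$, $p_{j-1}$ and $q_j$ so that the memory/tracking split is legitimate and the sign of $x(t-h)$ is as claimed, and confirming that the constant extremal value really is a non-increasing majorant (resp.\ non-decreasing minorant) in the sense of Lemma \ref{L20}, so that the bound on $y$ holds across the whole interval. Once these order relations are secured, the three inequalities fall out of the \cite{lpprt} integration, with the dichotomy $V_{2j-1}\le x_2$ versus $V_{2j-1}>x_2$ dictating the choice between $A_+$ and $B$.
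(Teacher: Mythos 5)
Your overall architecture matches the paper's: integrate $y=x'$ from the zero $Q_k$ to the extremum $T_k$, control $y$ via Lemma \ref{L20} with a monotone minorant/majorant, pass from $\rho$ to $r$ on the positive side by Corollary \ref{co22}, and let the condition $V_{2j-1}\le x_2$ decide between $A_+$ and $B$. But the step that actually produces the closed forms $A_\pm,B$ fails as you describe it. On your ``tracking phase'' you change variables from $t$ to $v=x(t-h)$ in $\frac{1}{h}\int\rho(x(t-h))\,dt$; this yields $\frac{1}{h}\int\rho(v)\,\frac{dv}{x'}$, and since $\rho(v)>0$ there, an \emph{upper} bound (as needed for $V_{2j+1}\le A_-$) would require a positive \emph{lower} bound on $x'$ along the rising branch. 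None is available ($x'$ vanishes at $T_{2j}$); the only estimate the constant minorant supplies is the upper bound $x'\le\rho(V_{2j})/h$, which pushes the inequality the wrong way. The same obstruction appears for $A_+$ and $B$ on the falling intervals.

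The missing device is a two-step bootstrap through an explicit piecewise-linear comparison function anchored at the \emph{zero}, not at the extremum. The paper first uses the crude bound $x'\le\rho(V_{2j})/h$ and integrates it backwards from $Q_{2j+1}$ to get the linear minorant $\tilde m(t)=h^{-1}\rho(V_{2j})(t-Q_{2j+1})\le x(t)$, capped at the level $V_{2j}$ at the point $t_1$ with $t_1-Q_{2j+1}=hV_{2j}/\rho(V_{2j})>-h$. Feeding this non-decreasing $m$ back into Lemma \ref{L20} gives $V_{2j+1}\le\frac{1}{h}\int_{Q_{2j+1}-h}^{Q_{2j+1}}\rho(m(s))\,ds$: the inequality goes the right way because $m\le x$ and $\rho$ decreases, and the integral is exactly computable because the substitution is along the \emph{known} linear function of slope $\rho(V_{2j})/h$, not along the unknown $x$; it evaluates precisely to $A_-(V_{2j},c,\tau)$. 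Dually, the linear majorant of slope $r(V_{2j-1})/h$ through $Q_{2j}$ yields $B$ when used on the whole window $[Q_{2j}-h,Q_{2j}]$ and $A_+$ when capped at $V_{2j-1}$, the cap lying inside that window exactly when $V_{2j-1}\le x_2$. Note also that this derivation is carried out in the paper itself; \cite{lpprt} is invoked only for the properties of the abstract functions ($A_+$ decreasing, $D>R$), not for the bounding relations, so deferring the computation to that reference does not close the gap.
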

\begin{proof} 
As we know,  $V_1= x(T_1) >0$ with $T_1-Q_0 > h$ and $x'(T_1) =0, \  x(Q_1) =0,$ $T_1-Q_1 <h$.   Set $Q_{-1}=T_{-1}=-\infty$, it is clear that 
$x(s) \geq V_0$ for all $s \in [Q_{-1},Q_1]$.  On the other hand, due to Lemma \ref{L20}, 
we know that 
$$
x'(t) \leq \max_{s\in [T_0,T_1]}x'(s) \leq  \frac 1h \rho (\min_{s \in [Q_{-1},Q_1]}x(s)) \leq \frac 1h \rho(V_0), \quad t \in [T_{0},T_1],  
$$
and therefore 
$$
x(t) = -\int_t^{Q_1}x'(s)ds \geq - \frac 1h \int_t^{Q_1}\rho(V_0)ds = \frac{\rho(V_0)}{h}(t-Q_1)=\tilde m(t), \quad t \in [T_0, Q_1].
$$
In particular, $x(T_0)=V_0 \geq \tilde m(T_0)$ and therefore equation $\tilde m(t)=V_0$ has a root $t_1 \in [T_0,Q_1]$.  Since $V_0 <0$, we know from the first lines of the proof of Lemma \ref{31} that $t_1-Q_1= hV_0/\rho(V_0) > -h$.  Consider now the non-decreasing function
$$
m(t)= \left\{ \begin{array}{ll} \tilde m(t) & \textrm{if $t \in [t_1,Q_1]\subset (Q_1-h,Q_1]$}, \\ V_0 & \textrm{if $t \leq t_1$},
\end{array} \right.
$$
it is clear that $x(t) \geq m(t)$ for all $t \in [Q_{-1},Q_1].$
Therefore, by Lemma \ref{L20}, 
$$V_1 =  \int_{Q_1}^{T_1}x'(s)ds\leq \frac 1h\int_{Q_1}^{T_1}\rho(m(s-h))ds \leq \frac 1h\int_{Q_1-h}^{Q_1}\rho(m(s))ds=  A_-(V_0,c,\tau). 
$$
Next, consider $V_2= x(T_2) <0$, we have  $x'(t) <0$ on $(T_1,T_2)$,  $x'(T_2) =0,$ $ x(Q_2) =0$ and $T_2- Q_2<h$.  By Lemma \ref{L20} and Corollary \ref{co22},  
$$
x'(t)\geq   \min_{s\in [T_1,T_2]}x'(s) \geq  \frac 1h \rho(\max_{s \in [Q_0,Q_2]}x(s)) \geq  \frac{\rho(V_1)}{h} > \frac{r(V_1)}{h},  \quad  t\in [T_1,T_2], 
$$
and therefore 
$$
x(t) = -\int_t^{Q_2}x'(s)ds < \frac{r(V_1)}{h} (t-Q_2) = \tilde M(t), \quad t \in [T_1, Q_2).
$$
Since $\tilde M(t)$ is decreasing on $[Q_0,Q_2]$, it holds that  $\tilde M(T_1) > x(T_1)=V_1$ and $\max_{t \in [Q_0,Q_2]} x(t) = x(T_1)$, we have that $x(t) < \tilde M(t)$ for $t \in (Q_0,Q_2]$.  Then 
Lemma \ref{L20} yields
$$
V_2 =  \int_{Q_2}^{T_2}x'(s)ds \geq \frac 1h \int_{Q_2-h}^{Q_2}\rho(\tilde M(s))ds  > \frac 1h \int_{Q_2-h}^{Q_2}r(\tilde M(s))ds=  B(V_1,c). 
$$

Suppose now that $V_1 \leq x_2$. Let  $t=t_2$ solve  the equation
$
hV_1= r(V_1)(t-Q_2), 
$
then $t_2-Q_2= h V_1/r(V_1)  \geq -h$ (see the comments following the definition of $x_2$).  Consider  the non-increasing function
$$
M(t)= \left\{ \begin{array}{ll} \tilde M(t) & \textrm{if $t \in [t_2,Q_2]\subset [Q_2-h,Q_2]$}, \\ V_1 & \textrm{if $t \leq t_2$},
\end{array} \right.
$$
it is clear that $x(t) \leq M(t)$ for all $t \in [Q_{0},Q_2].$
By applying Lemma  \ref{L20} and Corollary \ref{co22}, we obtain
$$
V_2 =  \int_{Q_2}^{T_2}x'(s)ds\ > 
 \frac 1h \int_{Q_2-h}^{Q_2} \rho(M(s))ds  > 
 \frac 1h \int_{Q_2-h}^{Q_2} r(M(s))ds =
   A_+(V_1,c). 
$$
Finally, we can repeat the above arguments  to obtain 
similar estimations  for all $j>2$. 
This completes the proof of Lemma \ref{ogran}. \hfill \end{proof}

We are now in a position to finalise the proof of Theorem \ref{eWT}. 
Consider the following   finite limits 
$$0 \geq m_*= \liminf_{j \to +\infty} V_j = \liminf_{t \to +\infty}x(t), \quad 0 \leq  M_* = \limsup_{j \to +\infty} V_j=
\limsup_{t \to +\infty}x(t).$$ 
From  Lemmas \ref{31} and \ref{ogran}  we deduce that 
$
M_* \leq A_-(m_*, c, \tau)$ and   $m_* \geq  D(M_*,c,\tau)
$. 
Clearly, Theorem  \ref{eWT} will be proved if we show that $\tau \leq 1.5$ yields $M_*=0$. 
Since this implication was already proved for  $\tau \leq 1$ in \cite[Theorem 8]{HT}, we may assume that $\tau >1$. 
So let us suppose that $M_*>0$,  $\tau \in (1,3/2]$.  But then, due to inequalities (\ref{AAB}) and  Proposition \ref{old}, 
$
M_* \leq A_-(m_*, c, \tau) < A_-(m_*, c)$ and   $m_* \geq  D(M_*,c,\tau) > D(M_*,c)  > R(M_*)
$. Therefore we have $M_* < A_-(R(M_*))$.  However,  by Corollary \ref{fi}, $A_-(R(M_*)) <M_*$, a contradiction. 
 Hence, $M_*=0$ and the proof of Theorem  \ref{eWT} is completed.

\bibliographystyle{amsplain}

\end{document}